\documentclass[12pt,a4paper]{article}
\usepackage[utf8]{inputenc}
\usepackage[english]{babel}
\usepackage{amsmath}
\usepackage{hhline}
\usepackage{ragged2e}
\usepackage{fancyhdr}
\usepackage{authblk}

\usepackage[margin=2.3cm]{geometry}
\setlength{\parindent}{0cm}
\usepackage{parskip}
\usepackage{bbm}
\usepackage{mathrsfs}
\usepackage{amsfonts}
\usepackage{placeins}
\usepackage{graphicx}
\usepackage{subcaption}
\usepackage{hyperref}
\usepackage{tabularx}
\usepackage{caption}
\usepackage{float}
\usepackage{mathtools}
\usepackage{listings}
\usepackage{xcolor}
\usepackage{systeme}
\usepackage{tabularx}
\usepackage{caption}
\usepackage{diagbox}
\usepackage{longtable}
\usepackage{booktabs}
\usepackage{multirow}
\usepackage{siunitx}
\usepackage{enumerate}
\usepackage{amssymb}
\usepackage{amsthm}

\usepackage[square,numbers]{natbib}

\newtheorem{theorem}{Theorem}[section]
\newtheorem{lemma}{Lemma}[section]

\newtheorem{definition}{Definition}[section]
\newtheorem{proposition}{Proposition}[section]

\newcommand{\E}{\mathbb{E}}
\newcommand{\R}{\mathbb{R}}
\newcommand{\I}{\mathbb{I}}
\renewcommand{\P}{\mathbb{P}}
\newcommand{\N}{\mathbb{N}}

\providecommand{\keywords}[1]{\small\textbf{\textbf{Keywords:}} #1}

\providecommand{\MSC}[1]{\small \textbf{\textbf{MSC 2020:}} #1}

\title{Weak convergence of stochastic integrals with applications to SPDEs}
\date{}
\author[]{Xavier Bardina\thanks{Corresponding author. \\Both authors are supported by the grant PID2021-123733NB-I00 from SEIDI, Ministerio de Econom\'ia y Competitividad.}}
\author[]{Salim Boukfal}

\affil[]{Departament de Matemàtiques, Universitat Autònoma de Barcelona, Cerdanyola del Vallès, Spain}

\affil[]{xavier.bardina, salim.boukfal@uab.cat}

\begin{document}

\maketitle

\begin{abstract}
    In this paper we provide sufficient conditions for sequences of random fields of the form $\int_{D} f(x,y) \theta_n(y) dy$ to weakly converge, in the space of continuous functions over $D$, to integrals with respect to the Brownian sheet, $\int_{D} f(x,y)W(dy)$, where $D \subset \R^d$ is a rectangular domain, $x \in D$, $f$ is a function satisfying some integrability conditions and $\{\theta_n\}_n$ is a sequence of stochastic processes whose integrals $\int_{[0,x]}\theta_n(y)dy$ converge in law to the Brownian sheet (in the sense of the finite dimensional distribution convergence). We then apply these results to stablish the weak convergence of solutions of the stochastic Poisson equation. 
\end{abstract}

\keywords{Brownian sheet, stochastic integral, random walk, Poisson process, Kac-Stroock, weak convergence, stochastic Poisson equation}

\MSC{60F05, 60F17, 60G50, 60G60, 60H05, 60H15}

\section{Introduction}

Let $d \geq 1$ be any integer and $T = (T_1,...,T_d) \in \R_+^d$ be fixed and consider in $D = [0,T] = \prod_{i=1}^d [0,T_i] = [0,T_1] \times ... \times [0,T_d]$ the usual partial ordering.

In, for instance, \cite[Corollary 1, p. 683]{wichurarandomwalk} or \cite[Theorem A.1]{bardinaboukfal1}, and \cite{multiparameterpoisson}, the authors exhibited examples of sequences of processes $\zeta_n = \{\zeta_n(x) \colon x \in D \}$, converging weakly to the Brownian sheet in the space of continuous functions over $D$, $\mathcal{C}(D)$, generalizing the well-known Donsker's invariance principle and the results proved by Stroock in \cite{stroock1982lectures} in the multidimensional parameter set case, respectively.

A natural question to ask ourselves then is if the integrals with respect to such processes converge, in the same sense, as stochastic processes, towards integrals with respect to the Brownian sheet. Of course, this problem has already been addressed in \cite{articlefrances}, \cite{kurtzprotter} (when a single parameter is considered) and, partially, in \cite{bardinaheat}, where two parameters are considered and the convergence of the stochastic integrals of a particular integrand (the Green function associated to the heat equation with Dirichlet boundary conditions) is then used to show the convergence of solutions of the stochastic heat equation. Mostly motivated by the latter, in \cite{bardinaboukfal1}, the authors consider sequences of kernels $\{\theta_n\}_{n \in \mathbb{N}}$ so that the processes $\zeta_n$ can be written as $\zeta_n(x) = \int_{[0,x]}\theta_n(y)dy$ and provide sufficient conditions for the processes
\begin{equation*}
    X_n \coloneqq \left\{ X_n(x) = \int_{[0,x]} f(y)\theta_n(y)dy \colon x \in D \right\}
\end{equation*}
to weakly converge, in $\mathcal{C}(D)$, to the process
\begin{equation*}
    X \coloneqq \left\{ X(x) = \int_{[0,x]} f(y)W(dy) \colon x \in D \right\}.
\end{equation*}
Where $W = \{W(x) \colon x \in D\}$ denotes a Brownian sheet and $f$ is some function in $L^{2q}(D)$ for some real $q \geq 1$. 

However, in many applications (for instance, in stochastic partial differential equations) the integrands involved might depend on the time/space parameter as well, which motivates the study of the same problem when the integrand depends on it. Hence, the aim of this paper is to continue with the study of the weak approximation of stochastic integrals in \cite{bardinaboukfal1} to extend the results to integrals of the form
\begin{equation*}
    \int_{[0,x]} f(x,y)W(dy) \quad \text{and}\quad \int_{D} f(x,y)W(dy),
\end{equation*}
where $f \colon D \times D \to \R$ will be some function satisfying some integrability conditions. Given that
\begin{equation*}
    \int_{[0,x]} f(x,y)W(dy) = \int_{D} \I_{[0,x]}(y) f(x,y)W(dy),
\end{equation*}
where $\I_A$ denotes the indicator function of $A \subset \R^d$ ($\I_A (y) = 1$ if $y \in A$ and $\I_A(y) = 0$ otherwise), we will focus our efforts on studying the latter. We will see, as well, how these results can be applied to prove the weak convergence of solutions to the stochastic Poisson equation with Dirichlet boundary conditions.

The paper is organized as follows, Section \ref{sec:preliminaries} is devoted to briefly introduce the Brownian sheet, the stochastic integral with respect to it and to provide some results regarding the weak convergence of stochastic processes. In Section \ref{sec:main result} we state and proof the main theorem and, in Section \ref{sec:SPDEs} we see how these results can be applied to prove the weak convergence of solutions of the Poisson equation.

\section{Preliminaries}\label{sec:preliminaries}

In this section we shall provide the main definitions and tools we will be working with.

\subsection{Brownian sheet and integrals with respect to it}

To define the Brownian sheet and the stochastic integral with respect to such process, we will make use of the isonormal Gaussian process over a real separable Hilbert space $H$ with inner product $\langle \cdot , \cdot \rangle_H$.

\begin{definition}
    We say that a stochastic process $W = \{W(h) \colon h \in H\}$ defined in a complete probability space is an isonormal Gaussian process if it is a centered Gaussian process with covariance function $\text{Cov}(f,g) = \langle f, g \rangle_H$ for all $f,g \in H$. 
\end{definition}

From now on, we shall take $H=L^2(D)$ with the usual inner product. A Brownian sheet (or $d$-parameter Wiener process) is then defined as the process $\Tilde{W} = \{\Tilde{W}(x) \colon x \in D \}$ with
\begin{equation*}
    \Tilde{W}(x) = W\left(\I_{[0,x]}\right).
\end{equation*}

For a given function $f \in L^2(D)$ and $x \in D$, we then define the Wiener integral of $f$ with respect to the Brownian sheet over $[0,x]$ as $W(f \I_{[0,x]})$ and denote it by
\begin{equation*}\label{integral with respect to brownian sheet}
    W(f \I_{[0,x]}) = \int_{[0,x]} f(y) \Tilde{W}(dy).
\end{equation*}
To simplify the notation, we will write $\int_{[0,x]} f(y) {W}(dy)$ instead of $\int_{[0,x]} f(y) \Tilde{W}(dy)$.

One can easily check (via Kolmogorov's continuity Theorem), that the Brownian sheet and the integral of an $L^2(D)$ function with respect to it have a continuous version, so, when talking about these objects, we will be talking about the continuous versions.

\subsection{Weak convergence of stochastic processes}

To stablish the weak convergence (or convergence in law or in distribution) of a sequence of stochastic processes $\{X_n\}_{n \in \mathbb{N}}$ in $\mathcal{C}(D)$, one usually needs to prove the convergence of the finite dimensional distributions and that it is relatively compact or, by Prokhorov's Theorem, that it is tight. The first result we introduce (whose proof can be found in \cite[Lemma 2.1]{bardinaboukfal1}) is a result regarding the convergence in law of random variables, which will be useful when proving the convergence of the finite dimensional distributions, while the second one is a tightness criterion which generalizes Proposition 2.3, p. 95, in \cite{billingsleypla}.

\begin{lemma}\label{lema espais normats}
    Let $(F,||\cdot||)$ be a normed vector space and $\{J^n\}_{n \in \mathbb{N}}$ and $J$ linear maps from $F$ to $L^1(\Omega)$ such that
    \begin{equation*}
        \sup_{n\geq 1} \E\left[ \left| J^n(f)  \right| \right] \leq C ||f||
    \end{equation*}
    and
    \begin{equation*}
        \E\left[ \left| J(f)  \right| \right] \leq C ||f||
    \end{equation*}
    for any $f \in F$ and for some positive constant $C$ and assume there is a dense subset $D$ of $F$ with respect to the norm $||\cdot ||$ such that $J^n(f)$ converges in law to $J(f)$ as $n$ approaches infinity for any $f \in D$. Then, for any $f \in F$, $J^n(f)$ converges in law to $J(f)$ as $n$ approaches infinity.
\end{lemma}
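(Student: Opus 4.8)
The plan is to run a standard density (a $3\varepsilon$) argument, exploiting that for real-valued random variables convergence in law is equivalent to the pointwise convergence of characteristic functions (L\'evy's continuity theorem). Equivalently one could test against bounded Lipschitz functions and invoke the portmanteau theorem; I will phrase everything with characteristic functions because the elementary estimate $|e^{ia}-e^{ib}| \le |a-b|$ for real $a,b$ dovetails perfectly with the hypothesised $L^1$ bounds. So fix $f \in F$ and $t \in \R$; the goal is to show $\E[e^{itJ^n(f)}] \to \E[e^{itJ(f)}]$ as $n \to \infty$.

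Given $\delta > 0$, density of $D$ lets me pick $g \in D$ with $||f-g|| < \delta$. Using linearity of $J^n$, the bound $|e^{ia}-e^{ib}| \le |a-b|$, and the uniform $L^1$ estimate,
\begin{equation*}
    \left| \E\left[ e^{itJ^n(f)} \right] - \E\left[ e^{itJ^n(g)} \right] \right| \le |t|\,\E\left[ \left| J^n(f) - J^n(g) \right| \right] = |t|\,\E\left[ \left| J^n(f-g) \right| \right] \le |t|\,C\,||f-g|| < |t|\,C\,\delta,
\end{equation*}
and the crucial point is that this bound is uniform in $n$, which is exactly what the supremum hypothesis provides. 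The very same computation with $J$ in place of $J^n$, using the second bound, gives $\left| \E[e^{itJ(g)}] - \E[e^{itJ(f)}] \right| \le |t|\,C\,\delta$. Now decompose by the triangle inequality:
\begin{align*}
    \left| \E[e^{itJ^n(f)}] - \E[e^{itJ(f)}] \right| &\le \left| \E[e^{itJ^n(f)}] - \E[e^{itJ^n(g)}] \right| + \left| \E[e^{itJ^n(g)}] - \E[e^{itJ(g)}] \right| \\
    &\qquad + \left| \E[e^{itJ(g)}] - \E[e^{itJ(f)}] \right|.
\end{align*}

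The first and third terms are each bounded by $|t|\,C\,\delta$ by the above, uniformly in $n$. The middle term tends to $0$ as $n \to \infty$, since $g \in D$ and $J^n(g) \to J(g)$ in law means precisely that the characteristic functions converge pointwise. Hence $\limsup_{n\to\infty} \left| \E[e^{itJ^n(f)}] - \E[e^{itJ(f)}] \right| \le 2|t|\,C\,\delta$, and letting $\delta \to 0$ gives convergence of the characteristic functions at every $t$, which yields $J^n(f) \to J(f)$ in law. I do not expect a genuine obstacle here: the argument is soft, and its only delicate point is the order of limits — the perturbation of the argument must be controlled \emph{uniformly in $n$} (so that the $n\to\infty$ limit can be taken afterwards), which is guaranteed by the uniform $L^1$ bound on $\{J^n\}_n$ together with linearity. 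The same scheme goes through verbatim for the bounded-Lipschitz metric should one prefer to avoid characteristic functions.
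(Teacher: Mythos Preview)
Your proof is correct. The $3\varepsilon$ argument via characteristic functions is clean: the uniform $L^1$ bound on $\{J^n\}_n$ together with linearity controls the outer two terms uniformly in $n$, and the density hypothesis handles the middle term. The inequality $|e^{ia}-e^{ib}|\le |a-b|$ and L\'evy's continuity theorem are exactly the right tools here.

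As for comparison: the paper does not actually prove this lemma. It states the result and refers the reader to \cite[Lemma~2.1]{bardinaboukfal1} for the proof, so there is no in-paper argument to compare against. Your approach is the standard one and almost certainly coincides with what that reference does (the only real alternative, as you note, is to test against bounded Lipschitz functions and use the portmanteau theorem, which amounts to the same scheme).
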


\begin{proposition}\label{tightness criterion}
    Let $\{X_n\}_{n \in \mathbb{N}}$ be a sequence of random fields in $D$ whose sample paths vanish on $\{x \in D \colon x_1 \cdot ... \cdot x_d = 0\}$ almost surely and for which there are constants $C, p > 0$ and $\delta > d$ such that
    \begin{equation}\label{moment condition}
        \sup_{n} \E\left[ \left|X_n(x) - X_n(z)  \right|^{p} \right] \leq C \left(\sum_{i=1}^d |x_i - z_i| \right)^\delta.
    \end{equation}
    Then, for each $n \in \N$, $X_n$ possesses a continuous version and the sequence of continuous versions is tight in $\mathcal{C}(D)$.
\end{proposition}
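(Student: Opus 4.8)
The plan is to treat this as a multiparameter Kolmogorov–Chentsov estimate coupled with the Arzelà–Ascoli characterization of tightness in $\mathcal{C}(D)$. The moment bound \eqref{moment condition} is exactly the hypothesis needed to run the classical dyadic chaining argument, and the crucial feature is that the bound is \emph{uniform} in $n$ (note the $\sup_n$), so every constant produced along the way will be independent of $n$. Concretely, I would first reduce a general increment $|X_n(x)-X_n(z)|$ to increments between neighbouring points of the dyadic grid $D_k$ of mesh $2^{-k}$ in each coordinate, telescoping along a path that changes one coordinate at a time and controlling the total displacement by the $\ell^1$ distance $\sum_i |x_i - z_i|$ that appears on the right-hand side of \eqref{moment condition}.

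Second, I would estimate the maximal oscillation over the grid. At scale $k$ there are of order $2^{kd}$ dyadic cells, and each neighbouring increment has $p$-th moment bounded by $C(2^{-k})^\delta$ by \eqref{moment condition}. Summing over the grid and applying Markov's inequality, the expected $p$-th power of the maximal oscillation at scale $k$ is bounded by a constant times $2^{kd}2^{-k\delta}=2^{-k(\delta-d)}$, which is summable in $k$ precisely because $\delta > d$. This yields, for any exponent $\gamma \in (0,(\delta-d)/p)$, a random Hölder constant $K_n$ with
\begin{equation*}
  |X_n(x)-X_n(z)| \le K_n \Big( \sum_{i=1}^d |x_i-z_i| \Big)^{\gamma} \quad \text{for all dyadic } x,z,
\end{equation*}
and, more importantly, a bound $\sup_n \E[K_n^p] < \infty$ inherited from the uniformity of \eqref{moment condition}. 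The almost sure finiteness of $K_n$ lets the restriction of $X_n$ to the dense dyadic set extend to a continuous version on all of $D$, establishing the first assertion.

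Third, tightness follows from this uniform moment control of the Hölder constants. By Markov's inequality $\sup_n \P(K_n > M) \le M^{-p}\sup_n \E[K_n^p] \to 0$ as $M\to\infty$, so the moduli of continuity of the $X_n$ are tight; and because the sample paths vanish on the boundary set $\{x_1\cdots x_d = 0\}$, the processes are anchored at $0$ there, supplying the ``tightness at a base point'' ingredient of the Arzelà–Ascoli criterion for free (this is the multiparameter analogue of anchoring a process in $\mathcal{C}([0,1])$ at the origin in Billingsley \cite{billingsleypla}). On the event $\{K_n \le M\}$ the paths are uniformly equicontinuous and uniformly bounded, hence contained, up to an arbitrarily small probability, in a compact subset of $\mathcal{C}(D)$, which is exactly tightness.

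The main obstacle I anticipate is the combinatorial bookkeeping of the $d$-dimensional chaining: one must count dyadic neighbours correctly, handle increments between points lying in different cells by a coordinate-by-coordinate telescoping, and verify that the series $\sum_k 2^{kd}2^{-k\delta}$ converges — this is the single place where the hypothesis $\delta > d$ is essential and where the one-dimensional argument genuinely has to be upgraded. Ensuring that all constants remain independent of $n$ throughout, so that the final Hölder-constant moment bound is uniform, is the other point requiring care, but it is automatic once the $\sup_n$ in \eqref{moment condition} is propagated through each inequality.
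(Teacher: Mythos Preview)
Your approach is correct but takes a genuinely different route from the paper. You run the classical multiparameter Kolmogorov--Chentsov chaining directly, producing a random H\"older constant $K_n$ with $\sup_n \E[K_n^p] < \infty$ and then invoking Arzel\`a--Ascoli for tightness. The paper instead appeals to the Bickel--Wichura tightness criterion \cite{bickelwichura}, which is phrased in terms of \emph{rectangular} increments $\Delta_x X_n(z)$ rather than simple differences, and the entire proof reduces to one combinatorial trick: pair the $2^d$ terms of $\Delta_x X_n(z)$ two by two so that each pair becomes an ordinary difference in the coordinate $j$ of smallest gap $|x_j-z_j|$, apply \eqref{moment condition} to each pair, and then use $|x_j-z_j|^{\delta} \le \big(\prod_{i=1}^d |x_i-z_i|\big)^{\delta/d}$ with $\delta/d > 1$. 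Your argument is self-contained and yields explicit H\"older regularity as a byproduct; theirs is much shorter once the cited criterion is granted and isolates cleanly the single inequality where $\delta > d$ enters. One small point to watch when you write up the details: the statement allows $p \in (0,1)$, so when you sum maximal oscillations across scales use subadditivity $|a+b|^p \le |a|^p + |b|^p$ rather than Minkowski's inequality.
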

\begin{proof}
    The existence of a continuous version follows from Kolmogorov's continuity Theorem. Hence, from now on, we shall consider that the processes $X_n$ have a.s. continuous sample paths. 
    
    In order to prove tightness, we will see that the hypotheses of \cite[Theorem 3, p. 1665]{bickelwichura} are fulfilled (when we consider the Lebesgue measure) under condition \eqref{moment condition}, that is, for fixed $x,z \in D$, $x \leq z$, we aim to prove that
    \begin{equation*}
        \sup_{n} \E\left[|\Delta_x X_n(z)|^p \right] \leq C' \left( \prod_{i=1}^d (z_i - x_i) \right)^{\delta'}
    \end{equation*}
    for some $C' > 0$ and $\delta' > 1$ and where $\Delta_x X_n(z)$ denotes the increment of $X_n$ over the rectangle $[x,z]$, which is given by
    \begin{equation*}
        \Delta_x X_n(z) = \sum_{\varepsilon_1, ..., \varepsilon_d \in \{0,1\}} (-1)^{d - \sum_{i=1}^d \varepsilon_i} X_n\left(x_1 + \varepsilon_1 (z_1 - x_1), ..., x_d + \varepsilon_d (z_d - x_d)\right).
    \end{equation*}
    Now fix any $n \in \N$, $\varepsilon_1,...,\varepsilon_d \in \{0,1\}$, $j \in \{1,...,d\}$ and consider the term
    \begin{equation*}
        (-1)^{d - \sum_{i=1}^d \varepsilon_i} X_n\left(x_1 + \varepsilon_1 (z_1 - x_1), ..., x_d + \varepsilon_d (z_d - x_d)\right).
    \end{equation*}
    For this term, there will be one (and only one) summand 
    \begin{equation*}
        (-1)^{d - \sum_{i=1}^d \varepsilon_i'} X_n\left(x_1 + \varepsilon_1' (z_1 - x_1), ..., x_d + \varepsilon_d' (z_d - x_d)\right)
    \end{equation*}
    in $\Delta_x X_n(z)$ such that $\varepsilon_i = \varepsilon_i'$ for all $i \in \{1,...,d\}\backslash \{j\}$ and $\varepsilon_j \neq \varepsilon_j'$. In particular, this will mean that $(-1)^{d - \sum_{i=1}^d \varepsilon_i} \cdot (-1)^{d - \sum_{i=1}^d \varepsilon_i'} = -1$. Hence, by using that for all $p \geq 0$ there is a constant $C_p$ ($C_p = 1$ if $p \leq 1$ and $C_p = 2^{p-1}$ if $p \geq 1$) such that $|a+b|^p \leq C_p(|a|^p + |b|^p)$ for any $a,b \in \R$ and by taking $j \in \{1,...,d\}$ so that $|z_j - x_j| = \min_{1\leq i \leq d}|z_i - x_i|$, we will have
    \begin{equation*}
        \E\left[ \left| \Delta_x X_n(z)  \right|^p  \right] \leq C_p C 2^{d-1} |z_j - x_j|^{\delta} \leq C' \left( \prod_{i=1}^d |z_i - x_i|\right)^{\delta'},
    \end{equation*}
    where $C' = C_p C 2^{d-1} > 0$ and $\delta' = \delta/d > 1$.
\end{proof}
The reader might ask why such result needs to be derived when, as mentioned in the proof of the proposition, we already have a tightness criterion, \cite[Theorem 3, p. 1665]{bickelwichura}. This is due to the fact that, in many situations, and as we will see in Section \ref{sec:SPDEs}, it is easier to deal with the differences $|f(x,\cdot) - f(z,\cdot)|$ rather than dealing with the increments in the first variable of $f(x,\cdot)$. 

\section{Statement and proof of the main result}\label{sec:main result}

This section is devoted to prove the convergence in distribution, in $\mathcal{C}(D)$, of the processes
\begin{equation}\label{aproximadors}
    X_n \coloneqq \left\{ X_n(x) = \int_{D} f(x,y)\theta_n(y)dy \colon x \in D \right\}
\end{equation}
towards the process
\begin{equation}\label{wiener integral}
    X \coloneqq \left\{ X(x) = \int_{D} f(x,y)W(dy) \colon x \in D \right\}.
\end{equation}
Of course, we will have to impose some conditions on the function $f \colon D \times D \to \R$ to make sense of integrals like $\int_{D} f(x,y)W(dy)$ and to guarantee the existence of a version of the processes $X_n$ and $X$ possessing a.s. continuous sample paths so that we can talk about convergence in $\mathcal{C}(D)$.

\begin{theorem}\label{main result}
    Let $f \colon D \times D \to \R$ be a measurable function such that, for fixed $z \in\{x \in D \colon x_1 \cdot ... \cdot x_d = 0\}$, $f(z,y) = 0$ for almost every $y \in D$ (with respect to the Lebesgue measure) and such that for each $x,z \in D$, $f(x,\cdot) \in L^{2q}(D)$ for some $q \geq 1$ and
    \begin{equation}\label{L2qHolder}
        ||f(x,\cdot) - f(z,\cdot) ||_{2q} \leq C ||x - z||^{\alpha}
    \end{equation}
    for some constants $C, \alpha > 0$ and where $|| \cdot ||_{2q}$ is the usual norm in $L^{2q}(D)$ and $||\cdot ||$ is some norm in $\R^d$. Suppose that, for any $g \in L^{2q}(D)$ and for some $m > d / \alpha $,
    \begin{equation}\label{fita moments}
        \sup_{n}\E\left[ \left| \int_{D}  g(y)\theta_n(y)dy \right|^m \right] \leq C ||g||_{2q}^m
    \end{equation}
    holds for some constant $C > 0$, that the processes $\zeta_n  = \{\int_{[0,x]}\theta_n(y)dy \colon x \in D\}$ converge towards the Brownian sheet (in the sense of convergence of the finite dimensional distributions) and that $\theta_n \in L^2(D)$ almost surely for each $n$. Then, the processes $X_n$ and $X$ defined by \eqref{aproximadors} and \eqref{wiener integral}, respectively, possess a continuous version and the former converges in law to the latter in $\mathcal{C}(D)$.
\end{theorem}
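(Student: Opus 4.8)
The plan is to establish weak convergence in $\mathcal{C}(D)$ by the usual two-step scheme: proving tightness of $\{X_n\}_n$ and then the convergence of the finite dimensional distributions, the limit being identified as $X$. The well-definedness of $X_n(x)$ is immediate since $f(x,\cdot) \in L^{2q}(D) \subset L^2(D)$ and $\theta_n \in L^2(D)$ a.s.\ make the integral meaningful via Cauchy--Schwarz.

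For tightness I would invoke Proposition \ref{tightness criterion}. The sample paths of $X_n$ vanish on $\{x \in D : x_1 \cdots x_d = 0\}$ almost surely, since for $z$ in that set $f(z,\cdot) = 0$ a.e.\ forces $X_n(z) = \int_D f(z,y)\theta_n(y)\,dy = 0$ a.s. The moment condition \eqref{moment condition} is the heart of the matter: writing $X_n(x) - X_n(z) = \int_D (f(x,y) - f(z,y))\theta_n(y)\,dy$ and applying the uniform bound \eqref{fita moments} with $g = f(x,\cdot) - f(z,\cdot)$ and exponent $m$, followed by the Hölder-type estimate \eqref{L2qHolder}, yields
\begin{equation*}
\sup_n \E\left[|X_n(x) - X_n(z)|^m\right] \leq C\, \|f(x,\cdot) - f(z,\cdot)\|_{2q}^m \leq C'\,\|x - z\|^{\alpha m}.
\end{equation*}
Since $\alpha m > d$ by hypothesis and all norms on $\R^d$ are equivalent, this is exactly \eqref{moment condition} with $p = m$ and $\delta = \alpha m > d$, so Proposition \ref{tightness criterion} delivers both a continuous version of each $X_n$ and tightness of the sequence. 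The continuous version of $X$ follows from Kolmogorov's continuity Theorem along the same lines: $X(x) - X(z)$ is a centered Gaussian variable with variance $\|f(x,\cdot) - f(z,\cdot)\|_2^2$, so its $m$-th moment is a constant multiple of $\|f(x,\cdot) - f(z,\cdot)\|_2^m$; bounding $\|\cdot\|_2 \leq C\|\cdot\|_{2q}$ on the bounded domain $D$ and using \eqref{L2qHolder} again gives a bound of order $\|x-z\|^{\alpha m}$.

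For the finite dimensional distributions, fix $x^1, \ldots, x^k \in D$ and $a_1, \ldots, a_k \in \R$. By the Cramér--Wold device it suffices to show $\sum_{j} a_j X_n(x^j) \to \sum_j a_j X(x^j)$ in law. Setting $g = \sum_j a_j f(x^j, \cdot) \in L^{2q}(D)$, these linear combinations are precisely $J^n(g) \coloneqq \int_D g(y)\theta_n(y)\,dy$ and $J(g) \coloneqq \int_D g(y)W(dy)$, so the problem reduces to the convergence in law of $J^n(g)$ to $J(g)$ for a single $g \in L^{2q}(D)$. Here I would apply Lemma \ref{lema espais normats} with $F = L^{2q}(D)$: the required uniform $L^1(\Omega)$ bounds for $J^n$ and $J$ follow from \eqref{fita moments} (via Jensen's inequality) and from the Gaussian moment identity together with $\|\cdot\|_2 \leq C\|\cdot\|_{2q}$, respectively. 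As dense subset I would take the linear span of the corner indicators $\{\I_{[0,x]} : x \in D\}$, which via inclusion--exclusion recovers indicators of arbitrary rectangles, hence all step functions, and is therefore dense in $L^{2q}(D)$; on it one has $J^n(\I_{[0,x]}) = \zeta_n(x)$ and $J(\I_{[0,x]}) = W(x)$, so the assumed convergence of the finite dimensional distributions of $\zeta_n$ to those of the Brownian sheet, combined with the continuous mapping theorem, gives $J^n(g) \to J(g)$ in law on the dense set. Lemma \ref{lema espais normats} then extends this to all of $L^{2q}(D)$.

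The main obstacle I anticipate lies in the finite dimensional convergence, and specifically in the careful setup of the density argument: one must verify both that the span of the $\I_{[0,x]}$ is genuinely dense in $L^{2q}(D)$ and that convergence on this set follows rigorously from the hypothesized finite dimensional convergence of $\zeta_n$, while the reformulation of a linear combination of the $X_n(x^j)$ as a single object $J^n(g)$ is what makes Lemma \ref{lema espais normats} applicable. The tightness estimate, by contrast, is essentially a direct consequence of \eqref{fita moments} and \eqref{L2qHolder}. Combining tightness with the finite dimensional convergence yields the weak convergence of $X_n$ to $X$ in $\mathcal{C}(D)$.
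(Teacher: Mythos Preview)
Your proposal is correct and follows essentially the same approach as the paper: tightness via Proposition~\ref{tightness criterion} by combining \eqref{fita moments} with \eqref{L2qHolder}, and finite dimensional convergence via Cram\'er--Wold together with Lemma~\ref{lema espais normats} applied on $L^{2q}(D)$. The only cosmetic difference is your choice of dense subset (the span of the corner indicators $\I_{[0,x]}$, reduced to rectangular step functions by inclusion--exclusion) versus the paper's direct use of simple functions $\sum_j g_j \I_{(x_{j-1},x_j]}$; these spans coincide and the convergence argument on them is the same.
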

\begin{proof}
    We start by observing that, since $f(x,\cdot) \in L^{2q}(D)$ for any $x \in D$, we have, in particular, that $f(x,\cdot) \in L^2(D)$ and hence, the stochastic integral $\int_D f(x,y)W(dy)$ makes sense for each $x \in D$. Similarly, since $\theta_n$ is square integrable, the integrals defining $X_n$ are well defined for each $x \in D$ as well.

    \textbf{Tightness and existence of a continuous version}

    By \eqref{fita moments} and \eqref{L2qHolder}, we have that
    \begin{equation*}
        \E\left[ \left|X_n(x) - X_n(z)\right|^m \right] = \E\left[ \left| \int_D (f(x,y) - f(z,y)) \theta_n(y) dy   \right|^m \right] \leq C ||f(x,\cdot) - f(z,\cdot)||_{2q}^m \leq C ||x - z||^{m \alpha},
    \end{equation*}
    for some constant $C>0$ independent of $x,z \in D$ (which might have changed from one inequality to another) and where we have used that $f(x,\cdot), f(z,\cdot) \in L^{2q}(D)$ (so their difference is in $L^{2q}(D)$ as well). Given that all norms in $\R^d$ are equivalent, we obtain that
    \begin{equation*}
        \sup_n \E\left[ \left|X_n(x) - X_n(z)\right|^m \right] \leq C \left( \sum_{i=1}^m |x_i - y_i|  \right)^{m \alpha}.
    \end{equation*}
    Since $m \alpha > d$, Proposition \ref{tightness criterion} gives us the existence of a continuous version for each $n$ (which we shall denote by $X_n$ as well) and the tightness of the sequence. 
    
    To show that $X$ has a version with continuous sample paths, one only needs to recall that $\int_D \left(f(x,y) - f(z,y) \right) W(dy)$ is a normal random variable with variance
    \begin{equation*}
        ||f(x,\cdot) - f(z,\cdot)||_2^2 \leq C ||f(x,\cdot) - f(z,\cdot) ||_{2q}^2 \leq C ||x - z||^{2\alpha} \leq C \left(\sum_{i=1}^d |x_i - z_i|\right)^{2\alpha}.
    \end{equation*}
    Where we have used Hölder's inequality and, again, the fact that all norms in $\R^d$ are equivalent. Hence, for any even integer $N > d/\alpha$, we have that
    \begin{equation*}
        \E\left[ \left|X(x) - X(z)   \right|^N  \right] \leq C \left( \sum_{i=1}^d |t_i - s_i| \right)^{\alpha'}
    \end{equation*}
    for some positive constant $C$ (independent of $x$ and $z$) and for $\alpha' = N\alpha > d$.

    \textbf{Convergence of the finite dimensional distributions}

    By the Cramér-Wold device, it suffices to show that, for any $k \in \N$, $z^1, ..., z^k \in D$ and $a_1,...,a_k \in D$, $S_n$ converges in law towards $S$, where
    \begin{equation*}
        S_n = \sum_{j=1}^m a_j X_n(z^j), \quad S = \sum_{j=1}^m a_j X(z^j).
    \end{equation*}
    But observe that $S_n = J^n(K)$, $S = J(K)$, where
    \begin{equation*}
        J^n(g) = \int_D g(y) \theta_n(y) du, \quad J(g) = \int_D g(y) W(dy), \quad K(y) = \sum_{j=1}^m a_j f(z^j,y).
    \end{equation*}
    Given that $f(z,\cdot) \in L^{2q}(D)$ for each $z \in D$, we have that $K$ is in $L^{2q}(D)$ as well. Moreover, by \eqref{fita moments} and the isometry property of the Wiener integral (alongside with Hölder's inequality), we have that
    \begin{equation*}
        \sup_{n} \E\left[ \left| J^n(g)  \right| \right] \leq C ||g||_{2q}, \quad \E\left[ \left| J(g)  \right| \right] \leq C ||g||_{2q},
    \end{equation*}
    for some positive constant $C $ and for any $g \in L^{2q}(D)$. Hence, by Lemma \ref{lema espais normats} applied to $F = L^{2q}(D)$ with the usual norm, it suffices to show that $J^n(g)$ converges in law towards $J(g)$ for simple functions $g$ of the form
    \begin{equation*}
        g(y) = \sum_{j=1}^l g_j \I_{(x_{j-1}, x_j]} (y),
    \end{equation*}
    with $l \geq 1$, $g_j \in \R$ and $0 = x_0 < x_1 < ... < x_l =T $, which are dense in $L^{2q}(D)$. To this purpose, we simply note that, for such functions $g$,
    \begin{equation*}
        J^n(g) = \int_{D} \left( \sum_{j=1}^l g_j \I_{(x_{j-1}, x_j]} (y) \right) \theta_n (y)dy 
        = \sum_{j=1}^l g_j \int_{(x_{j-1}, x_j]} \theta_n(y)dy,
    \end{equation*}
    which converges in law towards
    \begin{equation*}
        \sum_{j=1}^l g_j \int_{(x_{j-1}, x_j]} W(dy) = \int_{D} \left( \sum_{j=1}^l g_j \I_{(x_{j-1}, x_j]} (y) \right) W(dy) = J(g)
    \end{equation*}
    because the finite dimensional distributions of $\zeta_n$ converge to those of the Brownian sheet.
\end{proof}

As in \cite{bardinaboukfal1}, condition \eqref{fita moments} implies that, for each $x \in D$, the sequence $\{(X_n(x))^2\}_{n \in \N}$ is uniformly integrable, so
\begin{equation*}
    \lim_{n \to \infty} \E\left[ \left( \int_D f(x,y)\theta_n(y)dy \right)^2 \right] = \int_D f^2(x,y)dy. 
\end{equation*}

Of course, otherwise this result would be meaningless, there are examples of sequences of kernels $\{\theta_n\}_n$ verifying the above hypotheses. This is the content of \cite[Sections 3.1 and 3.2]{bardinaboukfal1}, where the Donsker kernels:
\begin{equation}\label{donsker kernels}
    \theta_n(x) = n^{\frac{d}{2}} \sum_{k = (k_1,...,k_d) \in \mathbb{N}^d}Z_k \mathbb{I}_{[k-1,k)}(nx),
\end{equation}
being $\{Z_k\}_{k \in \N^d}$ a sequence of i.i.d. random variables with zero mean, unitary variance and finite moments of order $m$ (in this case, $\zeta_n$ would be the analog of the linear interpolation of a random walk in the multidimensional parameter set case), and the Kac-Stroock kernels:
\begin{equation*}
    \theta_n(x) = n^{\frac{d}{2}} \left( \prod_{i=1}^d x_i \right)^{\frac{d-1}{2}} (-1)^{N_n(x)},
\end{equation*}
where $N_n$ is a multiparameter Poisson process of intensity $n$, are considered.

\section{Application to SPDEs}\label{sec:SPDEs}

In this section we shall see how the results derived above might be applied to approximate solutions of the stochastic Poisson equation 
\begin{align*}
    -\Delta u(x) + F(u(x)) = g(x) + \dot{W}(x), \quad x \in D = (0,1)^d, 
\end{align*}
with Dirichlet boundary condition $u(x) = 0$, $x\in \partial D$, and where $F \colon \R \to \R$ is a continuous bounded function satisfying
\begin{equation}\label{lipschitz}
    (x-y)(F(x) - F(y)) \geq -L |x-y|^2, \quad x,y \in \R
\end{equation}
for some \textit{small} constant $L \geq 0$, $g \in L^2(D)$, $\dot{W}$ denotes the formal derivative of the Brownian sheet and $d \in \{2,3\}$.

A \textit{mild} solution to this problem is given by a stochastic process $u = \{u(x) \colon x \in \Bar{D}\}$ verifying, for almost every $x \in D$ and with probability 1, the equation
\begin{equation}\label{Poisson equaiton}
    u(x) + \int_D K(x,y) F(u(y)) dy = \int_D K(x,y)g(y)dy + \int_D K(x,y) W(dy),
\end{equation}
where $K$ is the Green function associated to the Laplace equation with the same boundary conditions, which is given by
\begin{equation}\label{green amb bm}
    K(x,y) = G(x,y) - \E^x\left[ G(B_\tau, y)  \right]
\end{equation}
with
\begin{equation}\label{greendeterminista}
    G(x,y) = \begin{cases}
        \frac{1}{2\pi}\log|x-y|, \quad &d=2, \\
        \frac{1}{4\pi} |x-y|^{-1}, \quad &d=3,
    \end{cases}
\end{equation}
$|\cdot|$ the Euclidean norm in $\R^d$, $B = \{B_t \colon t \geq 0\}$ a standard Brownian motion starting from $x$, $\tau = \inf\{t > 0 \colon B_t \notin D\}$ and $\E^x$ the expectation with respect to the probability measure $\P^x$ under which $B$ is a Brownian motion with these characteristics (see, for instance, \cite{Doob1984} and \cite{Gilbarg2001}). Observe that $K(x,y) = K(y,x)$ and that $K(x,y) = 0$ for almost every $y \in D$ if $x \in \partial D$. Indeed, in such situation one has that $\tau = 0$ $\P^x$-a.s., so $B_{\tau} = x$ $\P^x$-a.s. and therefore, $K(x,y) = G(x,y) - \E^x\left[ G(x, y)  \right] = 0$.

As a consequence of Poincaré's inequality, one has that, for any $\varphi \in L^2(D)$, 
\begin{equation}\label{poincare}
    \left\langle \int_D K(\cdot, y)\varphi(y)dy, \varphi  \right\rangle \geq a \left|\left| \int_D K(\cdot, y) \varphi(y)dy  \right|\right|_2^2,
\end{equation}
for some universal constant $a>0$ (see \cite[Lemma 2.4, p. 225]{pardoux}). It is proven in \cite[Theorem 2.5, p. 225]{pardoux} as well that, whenever $0 \leq L < a$, equation \eqref{Poisson equaiton} has a unique solution with a.s. continuous sample paths.

Our goal is to show that the mild solutions of the Poisson equations
\begin{equation*}
    -\Delta u_n(x) + F(u_n(x)) = g(x) + \theta_n(x),\quad x \in D
\end{equation*}
with same boundary conditions as in \eqref{Poisson equaiton}, which are given by the unique solutions to the integral equations
\begin{equation}\label{Poisson equation aproximadors}
    u(x) + \int_D K(x,y) F(u_n(y)) dy = \int_D K(x,y)g(y)dy + \int_D K(x,y) \theta_n(y)dy,
\end{equation}
converge in law, in the space of continuous functions $\mathcal{C}(D)$ (we write $D$ instead of $\Bar{D}$, the closure of $D$, to simplify the notation, but bear in mind that we are talking about weak convergence of functions defined on $\Bar{D}$) to the mild solution $u$ of \eqref{Poisson equaiton}.

Several things have to be seen in order to prove the desired convergence, for instance, one first has to check that $K(x,\cdot)$ belongs to $L^{2q}(D)$ for some $q \geq 1$ in order to make sense of the integral $\int_D K(x,y)W(dy)$.

Our strategy to prove the convergence will be the following one:
\begin{enumerate}
    \item We will show that $K$ satisfies condition \eqref{L2qHolder} for some $q \geq 1$, $\alpha > 0$. This implies that integrals like $\int_D K(x,y)W(dy)$ are well defined as Wiener integrals and, moreover, by Theorem \ref{main result}, that the integral processes $\int_{D}K(x,y) \theta_n(y)dy$ converge in law, in the space $\mathcal{C}(D)$, towards $\int_D K(x,y)W(dy)$, whenever the kernels $\{\theta_n\}_n$ verify condition \eqref{fita moments}.

    \item We then consider the operator $Tu(x) \coloneqq u(x) + \int_D K(x,y)F(u(y))dy$ and $\eta \in \mathcal{C}_0(D)$ (where $\mathcal{C}_0(D)$ is the space of continuous functions in $\Bar{D}$ vanishing on $\partial D$) and show that:
    \begin{enumerate}[2.1]
        \item if $u \in \mathcal{C}_0(D)$, $Tu$ belongs to $\mathcal{C}_0(D)$ as well,

        \item the equation
        \begin{equation}\label{equacio eta}
            Tu(x) = \int_D K(x,y)g(y) dy + \eta(x)
        \end{equation} 
        has a unique solution $u \in \mathcal{C}_0(D)$. Results concerning the existence and uniqueness of solutions of these equations can be found in, for instance, \cite[Theorem 26.A, p. 557]{Zeidler1990} or \cite[Theorem 2.1, p. 171]{lions1969quelques}, so we will only have to show that our solution is indeed in $\mathcal{C}_0(D)$.
    \end{enumerate}
    \item By the previous item, we will be able to define a map $\Psi \colon \mathcal{C}_0(D) \to \mathcal{C}_0(D)$, $\eta \mapsto \Psi(\eta) = u$, where $u$ is the unique solution in $\mathcal{C}_0(D)$ of \eqref{equacio eta}. Since $K$ vanishes on the boundary and, by the first point, $K$ verifies \eqref{L2qHolder}, we will have that the solutions of \eqref{Poisson equaiton} and \eqref{Poisson equation aproximadors} are given by, respectively,
    \begin{equation*}
        u = \Psi\left( \int_D K(\cdot, y) W(dy)  \right), \quad u_n = \Psi\left( \int_D K(\cdot, y) \theta_n(y)dy  \right).
    \end{equation*}
    So, if we manage to show that the map $\Psi$ is continuous with respect to the uniform norm, the continuous mapping theorem will give us the desired conclusion.
\end{enumerate}

To prove the first point, we follow some of the ideas in \cite[Lemma 2.1]{pardoux} and \cite[Theorem 1]{sanzmartinez}.

\begin{lemma}\label{L2q Holder bounds for K}
    There are positive constants $\alpha, \beta$ and $C$, $\alpha > 2$, such that
    \begin{equation}\label{L2qholderK}
        || K(x,\cdot) - K(z, \cdot ) ||_\alpha \leq C |x - z |^\beta
    \end{equation}
    for any $x,z \in \Bar{D}$.
\end{lemma}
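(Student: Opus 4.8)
The plan is to exploit the decomposition $K(x,y) = G(x,y) - H(x,y)$ with $H(x,y) = \E^x[G(B_\tau,y)]$, together with the convexity of $D = (0,1)^d$. I would first record two facts. Since $K$ is the genuine Green function of $-\Delta$ on the bounded domain $D$, it obeys the classical bounds $0 \le K(x,y)$ and $|K(x,y)| \le C\gamma_d(|x-y|)$, where $\gamma_2(r) = 1 + |\log r|$ and $\gamma_3(r) = r^{-1}$; for $d=3$ this is immediate from $0 \le K \le G$ because $H \ge 0$. In particular $K(x,\cdot) \in L^\alpha(D)$ for every $\alpha < 3$ (for all $\alpha$ when $d=2$), which lets me fix once and for all an exponent $\alpha \in (2,3)$ that serves both dimensions. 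Secondly, away from the diagonal $K(\cdot,y)$ is harmonic, so on balls contained in $D \setminus \{y\}$ the interior gradient estimate for harmonic functions, combined with $|K| \le C\gamma_d$, yields $|\nabla_x K(x,y)| \le C|x-y|^{1-d}$ (up to a logarithmic factor when $d=2$).

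With $\alpha$ fixed I would prove \eqref{L2qholderK} by a near/far decomposition. Writing $h = |x-z|$ and assuming $h$ small (the complementary case being immediate from the uniform $L^\alpha$ bound just established), split $D = A \cup B$ with $A = \{y : |x-y| \le 2h\}$ and $B = \{y : |x-y| > 2h\}$, noting that by convexity the segment $[x,z]$ lies in $\bar D$. Then
\[
\|K(x,\cdot)-K(z,\cdot)\|_\alpha^\alpha \le \int_A |K(x,y)-K(z,y)|^\alpha\,dy + \int_B |K(x,y)-K(z,y)|^\alpha\,dy .
\]
On $A$ I would use the triangle inequality and the pointwise bound on each of $K(x,\cdot),K(z,\cdot)$ (with $|z-y|\le 3h$ on $A$), integrating the singularity of $\gamma_d$ over a ball of radius $\sim h$. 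On $B$, since $|w-y| \ge |x-y|/2 \gtrsim h$ for every $w$ on the segment, the gradient bound and the mean value theorem give $|K(x,y)-K(z,y)| \le C h\,|x-y|^{1-d}$, and I integrate $(h|x-y|^{1-d})^\alpha$ over $B$.

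Both regions contribute the same order, and the exponent bookkeeping is exactly what pins down the admissible range of $\alpha$: the near-diagonal integral converges when $\alpha < 3$ (this is the binding constraint for $d=3$), while the far-field integral converges at its lower cutoff $r \sim h$ precisely when $\alpha > 2$ (the binding constraint for $d=2$). Collecting the two pieces gives \eqref{L2qholderK} with $\beta = 3/\alpha - 1 > 0$ when $d=3$, and any $\beta < 2/\alpha$ (after absorbing the logarithm) when $d=2$; in either case $\alpha > 2$, as claimed, and the estimate then extends to all $x,z \in \bar D$, boundary points included (where $K \equiv 0$), by continuity.

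The main obstacle is justifying the gradient estimate $|\nabla_x K(x,y)| \le C|x-y|^{1-d}$ \emph{uniformly up to} $\partial D$. The interior harmonic estimate applies verbatim only when the ball $B(w,|w-y|/2)$ around a point $w \in [x,z]$ is contained in $D$, which fails when $w$ is closer to $\partial D$ than to $y$. There, however, $K(\cdot,y)$ vanishes on $\partial D$, and the convexity of the cube permits a barrier/reflection argument (equivalently, the explicit method-of-images representation of the Green function of a box) to recover the same bound, with the boundary vanishing only improving matters. This is precisely the point at which I would invoke the Green-function estimates in the spirit of \cite[Lemma 2.1]{pardoux} and \cite[Theorem 1]{sanzmartinez}; once the gradient bound is available up to the boundary, the decomposition above is routine.
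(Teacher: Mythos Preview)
Your approach is correct and follows a genuinely different route from the paper's. The paper first reduces from $K$ to the free Green function $G$: writing $K(x,y)=G(x,y)-\E^x[G(B_\tau,y)]$, a translation of the Brownian motion combined with Jensen's inequality and Tonelli lets the harmonic correction $\E^x[G(B_\tau,\cdot)]-\E^z[G(B_\tau,\cdot)]$ be controlled by the $L^\alpha$-H\"older modulus of $G$ itself, so it suffices to prove \eqref{L2qholderK} with $G$ in place of $K$. For $G$ the paper then splits $|G(x,y)-G(z,y)|^\alpha=|\cdot|^{\varepsilon}\,|\cdot|^{\alpha-\varepsilon}$, applies H\"older with conjugate exponents $(q,p)$, and uses the one-variable mean value theorem on $\log r$ (resp.\ $1/r$) on the second factor to extract the power $|x-z|^{\alpha-\varepsilon}$; the remaining integrals are bounded by radial changes of variables, and the admissible ranges of $\varepsilon,p$ produce the constraint $\alpha<3$ for $d=3$ (any $\alpha$ works for $d=2$).

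The trade-off is clear. The paper's reduction to $G$ sidesteps the boundary entirely---once one works with the explicit singularity on $\R^d$ there is no domain regularity to verify---and the H\"older/MVT device is completely elementary. Your near/far decomposition is the classical PDE argument and is arguably more transparent about why the exponent thresholds arise (your $\beta=3/\alpha-1$ for $d=3$ agrees with the paper's value once the latter is optimised over $p$), but it costs you the gradient bound $|\nabla_xK(x,y)|\le C|x-y|^{1-d}$ uniformly up to $\partial D$, which you correctly flag as the one nontrivial input. For the cube this is indeed recoverable by odd reflection across the flat faces (so the extended function is harmonic on a full ball and the interior estimate applies), so your outline closes; this is precisely the work the paper avoids by passing to $G$ first.
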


\begin{proof}
    We first see that, if we manage to show that
    \begin{equation}\label{Lipschitz Lalfa G}
        || G(x,\cdot) - G(z, \cdot ) ||_\alpha \leq C |x - z |^\beta,
    \end{equation}
    then we will have \eqref{L2qholderK} as well. Indeed by the strong Markov property, for any $x,y,z \in \Bar{D}$, 
    \begin{equation*}
        \E^z\left[ G(B_\tau, y)  \right] = \E^x\left[ G(B_\tau - x + z, y)  \right],
    \end{equation*}
    so
    \begin{align*}
        || K(x,\cdot) - K(z, \cdot ) ||_\alpha &\leq || G(x, \cdot) - G(z,\cdot) ||_\alpha + ||\, \E^x\left[ G(B_\tau, \cdot)\right] - \E^x\left[ G(B_\tau - x + z, \cdot)  \right] ||_\alpha   \\
        &\leq C |x-z|^\beta + \left(\int_D\left|  \int_{\Omega} \left[ G(u, y) - G(u - x + z, y) \right]d\P^x(u) \right|^{\alpha}dy  \right)^{\frac{1}{\alpha}}.
    \end{align*}
    By Jensen's inequality and Tonelli's Theorem, we have
    \begin{align*}
        \int_D\left|  \int_{\Omega} \left[ G(u, y) - G(u - x + z, y) \right]d\P^x(u) \right|^{\alpha}dy &\leq \int_D \left(\int_{\Omega}  \left| G(u, y) - G(u - x + z, y) \right|^\alpha d\P^x(u) \right) dy  \\
        &= \int_{\Omega} || G(u, \cdot) - G(u - x + z, \cdot) ||_\alpha^\alpha d\P^x(u)\\
        &\leq C^{\alpha } |x - z|^{\alpha \beta}.
    \end{align*}
    So, all in all,
    \begin{equation*}
        || K(x,\cdot) - K(z, \cdot ) ||_\alpha \leq 2C |x-z|^{\beta}.
    \end{equation*}
    Proving that it suffices to show \eqref{Lipschitz Lalfa G}.

    \textbf{Case $d = 2$}

    Given $\alpha > 2$ and $0< \varepsilon <  \alpha$ (other restrictions might appear on $\varepsilon$ during the process), Hölder's inequality and the mean value theorem applied to the function $\log x$ yield
    \begin{align*}
        \int_D& \left| \log|x-y| - \log|z-y|  \right|^\alpha dy = \int_D \left| \log|x-y| - \log|z-y|  \right|^\varepsilon \left| \log|x-y| - \log|z-y|  \right|^{\alpha -\varepsilon}  dy \\
        &\leq \left( \int_D \left| \log|x-y| - \log|z-y|  \right|^{\varepsilon q} dy \right)^{\frac{1}{q}} \left( \int_D \left| \log|x-y| - \log|z-y|  \right|^{(\alpha -\varepsilon)p}\right)^{\frac{1}{p}} \\
        &= \left( \int_D \left| \log|x-y| - \log|z-y|  \right|^{\varepsilon q} dy \right)^{\frac{1}{q}} \left( \int_D \left| \frac{|x-y| - |z-y|}{\theta(|x-y|-|z-y|) + |z-y|}  \right|^{(\alpha -\varepsilon)p}\right)^{\frac{1}{p}} \\
        &\leq |x-z|^{\alpha - \varepsilon} \left( \int_D \left| \log|x-y| - \log|z-y|  \right|^{\varepsilon q} dy \right)^{\frac{1}{q}} \left( \int_D  \frac{1}{|x-y|^{(\alpha -\varepsilon)p}}dy + \int_D  \frac{1}{|z-y|^{(\alpha -\varepsilon)p}}dy  \right)^{\frac{1}{p}},
    \end{align*}
    for some $\theta \in (0,1)$, $p,q > 1$ such that $1/p + 1/q = 1$ and where we have used that $||a| - |b|| \leq |a-b|$ for any $a,b \in \R$ and that the maximum of $\left| \theta(|x-y|-|z-y|) + |z-y| \right|^{-(\alpha -\varepsilon)p}$ is attained at $\theta = 0$ or $\theta = 1$. Using, as well, that $|a + b|^q \leq C_q (|a|^q + |b|^q)$ for any $a,b \in \R$ and $q \geq 0$, we see that this last expression can be bounded by
    \begin{equation*}
        C|x-z|^{\alpha - \varepsilon} \left( \int_D |\log|x-y||^{\varepsilon q}dy + \int_D |\log|z-y||^{\varepsilon q}dy  \right)^{\frac{1}{q}} \left( \int_D  \frac{1}{|x-y|^{(\alpha -\varepsilon)p}}dy + \int_D  \frac{1}{|z-y|^{(\alpha -\varepsilon)p}}dy  \right)^{\frac{1}{p}}.
    \end{equation*}
    If we let $B(\xi,\rho)$ denote the ball (disk in two dimensions) of radius $\rho > 0$ and center $\xi \in \R^d$, then we have $D \subset B(x,2)$, implying that
    \begin{align*}
        \int_D |\log|x-y||^{\varepsilon q}dy \leq \int_{B(x,2)} |\log|x-y||^{\varepsilon q}dy = 2\pi \int_0^2  |\log r|^{\varepsilon q} r dr = 2\pi \int_{-\infty}^{\log 2} |t|^{\varepsilon q } e^{2t} dt < \infty.
    \end{align*}
    Where we have made a change to polar coordinates (centered at $x$) and applied the substitution $t = \log r$. We see that the obtained bound (finite for any $\varepsilon> 0$ and $q > 1$), does not depend on $x$. Similarly, we can bound the integral $\int_D |\log|z-y||^{\varepsilon q}dy$ by a finite constant independent of $z$. We can see, as well, that
    \begin{equation*}
        \int_D  \frac{1}{|x-y|^{(\alpha -\varepsilon)p}}dy \leq \int_{B(x,2)}  \frac{1}{|x-y|^{(\alpha -\varepsilon)p}}dy = 2\pi \int_0^2 \frac{r}{r^{(\alpha -\varepsilon)p}}dr.
    \end{equation*}
    This last integral is finite if $(\alpha - \varepsilon)p < 2$, or, equivalently, if $\alpha - \frac{2}{p} < \varepsilon $.
    
    Hence, we have proven that, for all $\alpha > 2$, we can find $p > 1$ with $0 < \alpha - 2/p$, $\varepsilon \in (\alpha - 2/p,\alpha)$ and a constant $C$ (which will depend on this last three parameters) such that 
    \begin{equation*}
        ||G(x,\cdot) - G(z,\cdot)||_\alpha \leq C |x-z|^{\frac{\alpha - \varepsilon}{\alpha}}
    \end{equation*}
    for any $x,z \in D$ in the case $d = 2$.

    \textbf{Case $d = 3$}

    Again, let us fix $0 < \varepsilon < \alpha$. By Hölder's inequality and the mean value theorem (applied to the function $1/x$), we have
    \begin{align*}
        \int_D& \left| \frac{1}{|x-y|} - \frac{1}{|z-y|}  \right|^{\alpha} dy = \int_D \left| \frac{1}{|x-y|} - \frac{1}{|z-y|}  \right|^{\varepsilon} \left| \frac{1}{|x-y|} - \frac{1}{|z-y|}  \right|^{\alpha - \varepsilon}dy \\
        &\leq C |x-z|^{\alpha - \varepsilon} \left( \int_D \frac{dy}{|x-y|^{\varepsilon q}} + \int_D \frac{dy}{|z-y|^{\varepsilon q}} \right)^{\frac{1}{q}} \left( \int_D\frac{dy}{\left| \theta(|x-y|-|z-y|) + |z-y| \right|^{2(\alpha -\varepsilon)p}}\right)^{\frac{1}{p}} \\
        &\leq C |x-z|^{\alpha - \varepsilon} \left( \int_D \frac{dy}{|x-y|^{\varepsilon q}} + \int_D \frac{dy}{|z-y|^{\varepsilon q}} \right)^{\frac{1}{q}} \left( \int_D\frac{dy}{|x-y| ^{2(\alpha -\varepsilon)p}} + \int_D\frac{dy}{|z-y| ^{2(\alpha -\varepsilon)p}}\right)^{\frac{1}{p}}
    \end{align*}
    where $\theta \in (0,1)$, $p,q>1$ are such that $1/p + 1/q = 1$ and in the last step we have used that the maximum of $\left| \theta(|x-y|-|z-y|) + |z-y| \right|^{-2(\alpha -\varepsilon)p}$ is attained at $\theta= 0$ or $\theta = 1$. 
    Similar arguments to the ones already seen for the case $d = 2$ (bearing in mind that now we must perform a spherical change of coordinates, hence, a factor $r^2$ will appear from the Jacobian) lead to
    \begin{equation*}
        \int_D \frac{dy}{|x-y|^{\varepsilon q}} \leq C \int_0^2 \frac{r^2}{r^{\varepsilon q}} dr
    \end{equation*}
    where $C> 0$ is some constant independent of $x$. This last integral converges if $\varepsilon < \frac{3}{q} = \frac{3(p-1)}{p}$. On the other side,
    \begin{equation*}
        \int_D\frac{dy}{|x-y| ^{2(\alpha -\varepsilon)p}} \leq C \int_0^2 \frac{r^2}{r^{2(\alpha -\varepsilon)p}}dr
    \end{equation*}
    for some constant $C> 0$ which is, again, independent of $x$. This last integral converges if $\varepsilon > \alpha - \frac{3}{2p}$ (the same arguments can be used to bound the integrals depending on $z$). Hence, $\varepsilon \in (\alpha - \frac{3}{2p}, \frac{3(p-1)}{p})$ is required. Contrary to what we have seen for $d = 2$, a new upper bound for $\varepsilon$ (apart from $\varepsilon < \alpha$) appears, which, in turn, gives a restriction on the parameter $\alpha$, which is that $\alpha - \frac{3}{2p} < \frac{3(p-1)}{p}$ or, equivalently
    \begin{equation*}
        \frac{\alpha}{3}< \frac{p-1}{p} + \frac{1}{2p} = \frac{2p-1}{2p} < 1.
    \end{equation*}
    Hence, for $d = 3$, we can only show condition \eqref{Lipschitz Lalfa G} for $\alpha < 3$. Let us fix, for instance, $p = 2$, then we have to determine for which values of $\alpha$ the intersection $(-\infty, 3/2) \cap (\alpha - 3/4, \infty)$ is non-empty, which happens to be true for $\alpha < 9/4$. That is, for each $2 < \alpha < 9/4$, we can find $\varepsilon \in (-\infty, 3/2) \cap (\alpha - 3/4, \infty)$ so that \eqref{Lipschitz Lalfa G} holds with $\beta = \frac{\alpha - \varepsilon}{\alpha}$ and some constant $C > 0$ depending on $\alpha$ and $\varepsilon$, but not on $x$ nor $z$.
\end{proof}

A different (and much simpler) proof of this result can be given when $\alpha = 2$ is considered in \eqref{L2qholderK}. This proof relies on the fact that, for $D = (0,1)^d$, the Green function $K$ admits a rather simpler and manageable representation in terms of Fourier series. We refer to \cite[Lemma 3.2]{istvan} for a proof of this result in this case. The main reason to prove Lemma \ref{L2q Holder bounds for K} is that, as seen in \cite[Section 3.2]{bardinaboukfal1}, hypothesis \eqref{fita moments} has only been proved for $q > 1$ when the Kac-Stroock kernels are considered. 

We now address the problem of existence and uniqueness of solutions in $\mathcal{C}_0(D)$ of equation \eqref{equacio eta}. 

\begin{proposition}
    The map $T \colon \mathcal{C}_0(D) \to \mathcal{C}_0(D)$ defined by
    \begin{equation*}
        Tu(x) = u(x) + \int_D K(x,y)F(u(y))dy
    \end{equation*}
    is bijective. That is, for any $b \in \mathcal{C}_0(D)$, there is a unique $u \in \mathcal{C}_0(D)$ such that $Tu = b$.
\end{proposition}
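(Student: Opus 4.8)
The plan is to verify the three defining properties of a bijection separately: that $T$ maps $\mathcal{C}_0(D)$ into itself, that it is injective, and that it is surjective, reducing surjectivity to an $L^2(D)$-solvability statement followed by a regularity bootstrap. Throughout I would write $\mathcal{K}\varphi(x) = \int_D K(x,y)\varphi(y)\,dy$ for the associated integral operator. To see that $Tu \in \mathcal{C}_0(D)$ whenever $u \in \mathcal{C}_0(D)$, note that $F$ is continuous and bounded, so $\psi := F(u(\cdot))$ is bounded and hence lies in $L^{\alpha'}(D)$, where $\alpha'$ is the conjugate exponent of the $\alpha > 2$ furnished by Lemma \ref{L2q Holder bounds for K}. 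Hölder's inequality together with \eqref{L2qholderK} then gives
\begin{equation*}
    |\mathcal{K}\psi(x) - \mathcal{K}\psi(z)| \leq ||K(x,\cdot) - K(z,\cdot)||_\alpha \, ||\psi||_{\alpha'} \leq C|x - z|^\beta \, ||\psi||_{\alpha'},
\end{equation*}
so $\mathcal{K}\psi$ is (Hölder) continuous on $\Bar{D}$; and since $K(x,\cdot) = 0$ a.e. for $x \in \partial D$, it vanishes on $\partial D$. As $u \in \mathcal{C}_0(D)$ as well, $Tu = u + \mathcal{K}\psi \in \mathcal{C}_0(D)$.

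For injectivity, suppose $Tu = Tv$ with $u,v \in \mathcal{C}_0(D)$, so that $u - v = -\mathcal{K}(F(u) - F(v))$. Pairing with $\varphi := F(u) - F(v)$ and using the one-sided Lipschitz bound \eqref{lipschitz} on the left and the Poincaré inequality \eqref{poincare} on the right,
\begin{equation*}
    -L\,||u - v||_2^2 \leq \langle u - v, \varphi \rangle = -\langle \mathcal{K}\varphi, \varphi \rangle \leq -a\,||\mathcal{K}\varphi||_2^2 = -a\,||u - v||_2^2.
\end{equation*}
Hence $(a - L)\,||u-v||_2^2 \leq 0$, and since $L < a$ this forces $u = v$ a.e., so $u = v$ by continuity.

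The bulk of the work is surjectivity. Given $b \in \mathcal{C}_0(D) \subset L^2(D)$, I would first produce a solution $u \in L^2(D)$ of $u + \mathcal{K}(F(u)) = b$. The cited monotone-operator results (\cite[Theorem 26.A, p. 557]{Zeidler1990}, \cite[Theorem 2.1, p. 171]{lions1969quelques}) deliver existence once monotonicity and coercivity are in place, both of which follow from \eqref{lipschitz}, \eqref{poincare} and the boundedness of $F$; alternatively, since $\mathcal{K}$ is compact on $L^2(D)$ and $F$ is bounded, the map $\Phi(u) = b - \mathcal{K}(F(u))$ is continuous and carries a closed ball of $L^2(D)$ into a relatively compact subset of itself, so Schauder's fixed point theorem applies. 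Either route gives $u \in L^2(D)$. The regularity bootstrap then finishes the argument: writing $u = b - \mathcal{K}(F(u))$ and observing that $F(u) \in L^\infty(D)$ because $F$ is bounded, the first paragraph shows $\mathcal{K}(F(u)) \in \mathcal{C}_0(D)$, whence $u = b - \mathcal{K}(F(u)) \in \mathcal{C}_0(D)$ and $Tu = b$.

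The step I expect to be the genuine obstacle is precisely the $L^2$-solvability underlying surjectivity. The delicate point is that $b \in \mathcal{C}_0(D)$ need not lie in $H_0^1(D)$, so one cannot simply apply $-\Delta$ to recast $Tu = b$ as the classical monotone equation $-\Delta u + F(u) = -\Delta b$ in $H^{-1}(D)$; the solvability must instead be obtained directly at the level of the integral (mild) formulation in $L^2(D)$, either through the compactness of $\mathcal{K}$ or through the weaker regularity framework used in the cited references. Once existence in $L^2(D)$ is secured, well-definedness, injectivity and the bootstrap to $\mathcal{C}_0(D)$ are routine consequences of Lemma \ref{L2q Holder bounds for K}, the boundedness of $F$, and the pair of inequalities \eqref{lipschitz}--\eqref{poincare}.
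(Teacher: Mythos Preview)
Your argument is correct and shares the paper's overall arc: show $T$ maps $\mathcal{C}_0(D)$ into itself via Lemma \ref{L2q Holder bounds for K}, obtain existence and uniqueness of a solution to $Tu=b$, and then bootstrap regularity from $u = b - \mathcal{K}(F(u))$ using the boundedness of $F$ together with Lemma \ref{L2q Holder bounds for K}. The paper handles the middle step by simply citing \cite{Zeidler1990}, \cite{lions1969quelques} and \cite{SANZSOLE20181857} for the monotone-operator theory in $W_0^{1,2}(D)$ and does not isolate injectivity; your explicit uniqueness computation from \eqref{lipschitz}--\eqref{poincare} and, more notably, your alternative Schauder route in $L^2(D)$ (compactness of $\mathcal{K}$ plus boundedness of $F$) are additions. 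The Schauder argument is a genuine improvement: it sidesteps the issue you rightly flag, namely that an arbitrary $b \in \mathcal{C}_0(D)$ need not lie in $H_0^1(D)$, so the $W_0^{1,2}$ formulation $-\Delta u + F(u) = -\Delta b$ cannot be invoked verbatim for the proposition as stated. The paper does not comment on this and relies on the cited references; your compactness route yields a self-contained existence proof directly at the level of the integral equation.
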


\begin{proof}
    We first see that, if $u \in \mathcal{C}_0(D)$, then $Tu$ is in the same space. If $x \in \partial D$, then, since $K$ vanishes on $\partial D$, $Tu(x) = u(x) = 0$. Moreover, for $x,z \in \Bar{D}$, 
    \begin{equation*}
        \left| Tu(x) - Tu(z)  \right| \leq |u(x) - u(z)| + C|| K(x,\cdot) - K(z,\cdot)||_2,
    \end{equation*}
    where we have used that $F$ is bounded and Cauchy-Schwarz's inequality. Appealing to Lemma \ref{L2q Holder bounds for K} and the fact that $u$ is continuous, we obtain the continuity of $Tu$.

    As previously mentioned, the existence and uniqueness of solutions is a consequence of \cite[Theorem 26.A, p. 557]{Zeidler1990} or \cite[Theorem 2.1, p. 171]{lions1969quelques}. The computations to check that the hypotheses of these results are fulfilled (that is, that $T$ is strictly monotone, coercive and hemicontinuous as a map from $W_0^{1,2}(D)$, the usual Sobolev space of functions with null trace, to its topological dual, $(W_0^{1,2}(D))^*$) can be found in \cite[Theorem 2.2]{SANZSOLE20181857}, so we will omit them. 
    
    What we will show is that, if $b \in \mathcal{C}_0(D)$, then the solution is in the same space as well. Indeed, since $u$ solves $Tu = b$, we have that, for any $x,z \in \Bar{D}$,
    \begin{equation*}
        |u(x) - u(z)| \leq C ||K(x,\cdot) - K(z,\cdot)||_2 + |b(x) - b(z)|,
    \end{equation*}
    since $F$ is bounded. Again, the continuity of $b$ and Lemma \ref{L2q Holder bounds for K} imply the continuity of $u$. Moreover, we have that $u$ vanishes on $\partial D$ since $\int_D K(x,y)F(u(y))dy$ and $b(x)$ do so.
\end{proof}

Finally, we show that the map $\Psi$ is continuous with respect to the uniform norm.

\begin{proposition}
    The map $\Psi \colon \mathcal{C}_0(D) \to \mathcal{C}_0(D)$, $\eta \mapsto \Psi(\eta) = u$, where $u \in \mathcal{C}_0(D)$ is the unique solution of \eqref{equacio eta} is continuous with respect to the uniform norm.
\end{proposition}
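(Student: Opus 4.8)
The plan is to prove sequential continuity. Given $\eta_1, \eta_2 \in \mathcal{C}_0(D)$ with $u_1 = \Psi(\eta_1)$ and $u_2 = \Psi(\eta_2)$, I would subtract the two copies of \eqref{equacio eta}; the terms $\int_D K(\cdot,y)g(y)dy$ cancel, and writing $v = u_1 - u_2$, $h = F(u_1) - F(u_2)$, $w(x) = \int_D K(x,y) h(y) dy$ and $\psi = \eta_1 - \eta_2$, this leaves the identity $v + w = \psi$. The goal is then to control $||v||_\infty$ in terms of $||\psi||_\infty$. First I would establish an $L^2$ estimate, and afterwards bootstrap it to the uniform norm using the smoothing effect of $K$ recorded in Lemma \ref{L2q Holder bounds for K}.

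For the $L^2$ step, I would take the $L^2(D)$ inner product of $v + w = \psi$ against $h$. The monotonicity assumption \eqref{lipschitz}, applied pointwise with $u_1(y), u_2(y)$ and integrated, gives $\langle v, h\rangle \geq -L \, ||v||_2^2$, while Poincaré's inequality \eqref{poincare} applied to $\varphi = h$ (which lies in $L^2(D)$ because $F$ is bounded) gives $\langle w, h\rangle \geq a\,||w||_2^2$. Combining these with $\langle \psi, h\rangle \leq ||\psi||_2 \, ||h||_2$ and the uniform bound $||h||_2 \leq 2||F||_\infty |D|^{1/2}$ yields $a\,||w||_2^2 \leq ||\psi||_2 \, ||h||_2 + L\,||v||_2^2$. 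Inserting $||v||_2 \leq ||\psi||_2 + ||w||_2$, applying Young's inequality to the resulting cross term and, crucially, using the smallness hypothesis $L < a$ to absorb the $||w||_2^2$ contributions, I would arrive at a bound of the form $||w||_2^2 \leq C(||\psi||_2 + ||\psi||_2^2)$, whence $||v||_2 \leq ||\psi||_2 + ||w||_2 \to 0$ as $||\psi||_2 \to 0$. Since $||\psi||_2 \leq |D|^{1/2} ||\psi||_\infty$, this shows $u_1 \to u_2$ in $L^2(D)$ whenever $\eta_1 \to \eta_2$ uniformly.

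To pass to the uniform norm, I would write $||v||_\infty \leq ||\psi||_\infty + ||w||_\infty$ and estimate $||w||_\infty$ by Hölder's inequality, $|w(x)| \leq ||K(x,\cdot)||_\alpha \, ||h||_{\alpha/(\alpha-1)}$, with $\alpha$ as in Lemma \ref{L2q Holder bounds for K}. The factor $||K(x,\cdot)||_\alpha$ is bounded uniformly in $x$: for each $x \in \Bar{D}$ one picks a boundary point $x_0$ (where $K(x_0,\cdot) = 0$ a.e.) and invokes \eqref{L2qholderK} to get $||K(x,\cdot)||_\alpha \leq C|x - x_0|^\beta \leq C\,\mathrm{diam}(D)^\beta$. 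Now consider a sequence $\eta_k \to \eta$ uniformly with $u_k = \Psi(\eta_k)$, $u = \Psi(\eta)$; the $L^2$ step gives $u_k \to u$ in $L^2(D)$. To conclude $||u_k - u||_\infty \to 0$ I would use the subsequence principle: from any subsequence extract a further one along which $u_k \to u$ almost everywhere, so that continuity of $F$ gives $F(u_k) \to F(u)$ a.e.; since $|F(u_k) - F(u)| \leq 2||F||_\infty$, dominated convergence yields $||F(u_k) - F(u)||_{\alpha/(\alpha-1)} \to 0$, hence $||w_k||_\infty \to 0$ and $||u_k - u||_\infty \leq ||\eta_k - \eta||_\infty + ||w_k||_\infty \to 0$ along that subsequence. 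This forces $||u_k - u||_\infty \to 0$ for the whole sequence, giving continuity of $\Psi$.

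The main obstacle is that $F$ is only bounded and one-sided Lipschitz (monotone up to the constant $L$) rather than globally Lipschitz, so one cannot hope for a quantitative Lipschitz estimate of $\Psi$, nor directly control $||h||$ by $||v||$. This is exactly why the argument must split into a soft $L^2$ bound — where the interplay between the Poincaré constant $a$ and the smallness $L < a$ is essential to close the estimate — followed by a non-quantitative upgrade to the uniform norm through dominated convergence together with the smoothing bound for $K$, rather than a single contraction-type inequality.
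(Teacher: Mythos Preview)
Your proposal is correct and follows essentially the same route as the paper: an $L^2$ estimate obtained by pairing the subtracted equation against $F(u_1)-F(u_2)$, invoking \eqref{lipschitz} and \eqref{poincare} together with $L<a$, and then an upgrade to the uniform norm via a uniform bound on $\|K(x,\cdot)\|$ combined with a subsequence/dominated-convergence argument for $F$. The only differences are cosmetic: the paper arranges the $L^2$ algebra slightly differently (expanding $\|w\|_2^2$ from the identity $v+w=\psi$ rather than substituting $\|v\|_2\le\|\psi\|_2+\|w\|_2$ and using Young) and works with $\|K(x,\cdot)\|_2$ and Cauchy--Schwarz instead of your $\|K(x,\cdot)\|_\alpha$ and H\"older.
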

\begin{proof}
    For each $x \in \Bar{D}$ and $\eta, \eta' \in \mathcal{C}_0(D)$, we have
    \begin{align*}
        |\Psi(\eta)(x) - \Psi(\eta')(x)| &\leq \int_D |K(x,y)||F(\Psi(\eta(y)) - F(\Psi(\eta'(y))|dy + |\eta(x) - \eta'(x)| \\
        &\leq ||K(x,\cdot)||_2 ||F(\Psi(\eta)) - F(\Psi(\eta'))||_2 + ||\eta - \eta'||_\infty
    \end{align*}
    where $||\cdot||_\infty$ is the uniform norm on $D$. Hence,
    \begin{equation}\label{quasi continu}
        || \Psi(\eta) - \Psi(\eta')||_\infty \leq \Lambda ||F(\Psi(\eta)) - F(\Psi(\eta'))||_2 + ||\eta - \eta'||_\infty,
    \end{equation}
    where $\Lambda \coloneqq \sup_{x \in D} ||K(x,\cdot)||_2$ (which is finite by Lemma \ref{L2q Holder bounds for K}). Now let $v_n, v \in L^2(D)$ be such that $v_n \to v$ in $L^2(D)$, then we have that $F(v_n) \to F(v)$ in $L^2(D)$ as well. Indeed, given that $\{v_n\}_n$ converges in $L^2(D)$ to $v$, any subsequence $\{v_{n_k}\}_k$ will converge $L^2(D)$ towards $v$ as well. This will imply the existence of a further subsequence $\{v_{n_{k_j}}\}_j$ converging to $v$ almost everywhere in $D$. Since $F$ is continuous, $F(v_{n_{k_j}})$ will converge to $F(v)$ almost everywhere in $D$ and, since it is bounded, the dominated convergence theorem yields
    \begin{equation*}
        ||F(v_{n_{k_j}}) - F(v)||_2 \xrightarrow{j \to \infty}0.
    \end{equation*}
    Proving that any subsequence of $\{F(v_n)\}_n$ has a further subsequence converging, in $L^2(D)$, towards $F(v)$, which shows that $\{F(v_n)\}_n$ converges towards $F(v)$ in $L^2(D)$.

    Next, given a sequence $\{\eta_n\}_n \subset \mathcal{C}(D)$ uniformly convergent towards $\eta \in \mathcal{C}(D)$, then we have $||\eta - \eta_n||_2 \to 0$ as well, so, by what we have just seen, and by \eqref{quasi continu}, it suffices to show that $\Psi(\eta_n) \to \Psi(\eta)$ in $L^2(D)$ to show that $\Psi(\eta_n)$ uniformly converges towards $\Psi(\eta)$.

    To show the convergence in $L^2(D)$, we will follow some of the computations in \cite[Theorem 2.5]{pardoux} or \cite[Theorem 2.2]{SANZSOLE20181857} used to prove the existence of solutions to equations like $Tu = b$.

    Let $u = \Psi(\eta)$ and $u_n = \Psi(\eta_n)$, then, for each $x \in \Bar{D}$,
    \begin{equation}\label{diferencia solucions}
        \eta(x) - \eta_n(x) = u(x) - u_n(x) + \int_D K(x,y)\left[ F(u(y)) - F(u_n(y)) \right]dy.
    \end{equation}
    Taking the inner product of this last expression against $F(u) - F(u_n)$ and using \eqref{poincare} (with $\varphi = F(u) - F(u_n)$) and \eqref{lipschitz} (with $x = u$ and $y = u_n$) gives
    \begin{align}\label{expressio1}
        \langle F(u) - F(u_n), \eta - \eta_n \rangle 
        \geq -L ||u - u_n||_2^2 + a \left| \left| \int_D K(\cdot,y)\left[ F(u(y)) - F(u_n(y)) \right]dy \right| \right|_2^2.
    \end{align}
    On the other hand, we also have, by \eqref{diferencia solucions} again,
    \begin{equation*}
        \left| \left| \int_D K(\cdot,y)\left[ F(u(y)) - F(u_n(y)) \right]dy \right| \right|_2^2 = ||u - u_n||_2^2 + ||\eta - \eta_n||_2^2 - 2\langle u - u_n, \eta - \eta_n\rangle.
    \end{equation*}
    Plugging this in \eqref{expressio1} yields
    \begin{align}\label{més fites}
        a||\eta - \eta_n||_2^2 + (a-L)||u - u_n||_2^2 &\leq \langle F(u) - F(u_n) + 2a(u - u_n), \eta - \eta_n \rangle
        \nonumber\\
        &\leq ||\eta - \eta_n ||_2 \left(  ||F(u) - F(u_n)||_2 + 2a||u- u_n||_2 \right),
    \end{align}
    where in the last step we have used Cauchy-Schwarz's inequality and the triangle inequality. Finally, using the fact that
    \begin{equation*}
        ||u - u_n ||_2 \leq \Lambda ||F(u) - F(u_n) ||_2 + ||\eta - \eta_n||_2,
    \end{equation*}
    we deduce, from \eqref{més fites}, that
    \begin{equation*}
        a||\eta - \eta_n||_2^2 + (a-L)||u - u_n||_2^2 \leq ||\eta - \eta_n ||_2 \left( (1+2a \Lambda) ||F(u) - F(u_n)||_2 + 2a||\eta- \eta_n||_2 \right).
    \end{equation*}
    Since $F$ is bounded and $L < a$, we obtain that $|| u - u_n||_2 \to 0$ as $||\eta - \eta_n||_2 \to 0$, as was to be shown.
\end{proof}

The results seen in this section hold true if the assumption on $F$ being bounded is dropped but we assume that $F$ is a Lipschitz function whose Lipschitz constant, $L$, verifies the condition $L < \Lambda^{-1}$, where $\Lambda \coloneqq \sup_{x \in D} ||K(x,\cdot)||_2$. The existence and uniqueness of solutions to \eqref{equacio eta} then follows from Banach's fixed point theorem, while the continuity of the map $\Psi$ follows from a rather simple and straightforward computation.

\begin{proposition}
    Suppose $F$ is $L$-Lipschitz with $L < \Lambda^{-1}$, then, for each $\eta \in \mathcal{C}_0(D)$, equation \eqref{equacio eta} has a unique solution in $\mathcal{C}_0(D)$ and the induced map $\Psi \colon \mathcal{C}_0(D) \to \mathcal{C}_0(D)$ is continuous with respect to the uniform norm.
\end{proposition}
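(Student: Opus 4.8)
The plan is to recast \eqref{equacio eta} as a fixed-point equation and invoke Banach's fixed point theorem on the complete metric space $(\mathcal{C}_0(D), ||\cdot||_\infty)$, where $||\cdot||_\infty$ denotes the uniform norm. Writing out $Tu(x) = \int_D K(x,y)g(y)dy + \eta(x)$ and isolating $u$, I set
\begin{equation*}
    \Phi(u)(x) \coloneqq \eta(x) + \int_D K(x,y)\left[ g(y) - F(u(y)) \right]dy,
\end{equation*}
so that \eqref{equacio eta} is equivalent to the fixed-point equation $u = \Phi(u)$; a solution in $\mathcal{C}_0(D)$ is then exactly a fixed point of $\Phi$ in $\mathcal{C}_0(D)$, and it suffices to show that $\Phi$ is a contraction mapping $\mathcal{C}_0(D)$ into itself.

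First I would verify that $\Phi(\mathcal{C}_0(D)) \subset \mathcal{C}_0(D)$. Since $F$ is Lipschitz and any $u \in \mathcal{C}_0(D)$ is bounded on the compact set $\Bar{D}$, the function $g - F(u)$ lies in $L^2(D)$; continuity of $x \mapsto \int_D K(x,y)[g(y) - F(u(y))]dy$ then follows from Cauchy--Schwarz and Lemma \ref{L2q Holder bounds for K}, using that $||K(x,\cdot) - K(z,\cdot)||_2 \leq C||K(x,\cdot) - K(z,\cdot)||_\alpha \leq C|x - z|^\beta$ (the first inequality being Hölder's, valid since $\alpha > 2$ and $D$ has finite measure). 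Adding the continuous $\eta$ preserves continuity, and $\Phi(u)$ vanishes on $\partial D$ because $K(x,\cdot) \equiv 0$ there and $\eta \in \mathcal{C}_0(D)$.

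The contraction estimate is where the hypothesis $L < \Lambda^{-1}$ enters, and it hinges on playing the $L^2$ and uniform norms against one another. For $u,v \in \mathcal{C}_0(D)$, Cauchy--Schwarz in the $y$ variable, the Lipschitz bound $|F(u) - F(v)| \leq L|u - v|$, and the definition $\Lambda = \sup_{x \in D}||K(x,\cdot)||_2$ yield
\begin{equation*}
    ||\Phi(u) - \Phi(v)||_\infty \leq \Lambda\, ||F(u) - F(v)||_2 \leq \Lambda L\, ||u - v||_2 \leq \Lambda L\, ||u - v||_\infty,
\end{equation*}
where the last inequality uses $|D| = 1$, so that $||\cdot||_2 \leq ||\cdot||_\infty$. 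As $\Lambda L < 1$, $\Phi$ is a contraction and Banach's theorem provides the unique solution $u = \Psi(\eta) \in \mathcal{C}_0(D)$.

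For the continuity of $\Psi$, I would subtract the defining identities for $u = \Psi(\eta)$ and $u' = \Psi(\eta')$ to obtain $u - u' = (\eta - \eta') - \int_D K(\cdot,y)[F(u(y)) - F(u'(y))]dy$, and rerun the same chain of inequalities to get $||u - u'||_\infty \leq ||\eta - \eta'||_\infty + \Lambda L\,||u - u'||_\infty$, whence
\begin{equation*}
    ||\Psi(\eta) - \Psi(\eta')||_\infty \leq \frac{1}{1 - \Lambda L}\,||\eta - \eta'||_\infty,
\end{equation*}
so $\Psi$ is in fact Lipschitz in the uniform norm. The computation is routine once the fixed-point formulation is fixed; the only genuine subtlety is the norm bookkeeping in the contraction step --- bringing in $\Lambda$ through Cauchy--Schwarz and then descending from $||\cdot||_2$ to $||\cdot||_\infty$ via $|D| = 1$ --- so that the contraction constant comes out exactly as $\Lambda L$ and the threshold $L < \Lambda^{-1}$ closes the argument, while Lemma \ref{L2q Holder bounds for K} is needed only to keep $\Phi$ inside $\mathcal{C}_0(D)$.
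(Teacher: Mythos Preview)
Your proof is correct and mirrors the paper's argument almost exactly: the operator $\Phi$ is the paper's $\tilde{T}$, the contraction constant $\Lambda L$ is obtained the same way (Cauchy--Schwarz followed by $\|\cdot\|_2 \leq \|\cdot\|_\infty$ since $|D|=1$), and the Lipschitz bound $\|\Psi(\eta)-\Psi(\eta')\|_\infty \leq (1-\Lambda L)^{-1}\|\eta-\eta'\|_\infty$ for $\Psi$ is derived identically. If anything, your write-up is slightly more explicit about the role of $|D|=1$ in the norm comparison.
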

\begin{proof}
    Given $\eta \in \mathcal{C}_0(D)$, consider the operator $\Tilde{T} \colon \mathcal{C}_0(D) \to \mathcal{C}_0(D)$ defined by
    \begin{equation*}
        \Tilde{T}u(x) = - \int_D K(x,y)F(u(y))dy + \int_D K(x,y)g(y)dy + \eta(x), \quad x \in \Bar{D}.
    \end{equation*}
    The fact that $\Tilde{T}u \in \mathcal{C}_0(D)$ if $u \in \mathcal{C}_0(D)$ follows from the fact that $\int_D K(\cdot, y)F(u(y))dy$, $\int_D K(\cdot, y)g(y)dy$ and $\eta$ vanish on $\partial D$ and, for all $x,z \in \Bar{D}$,
    \begin{align*}
        \left| \Tilde{T}u(x) - \Tilde{T}u(z)  \right| \leq  ||K(x,\cdot) - K(z,\cdot)||_2 \left( |F(0)| + L||u||_\infty + ||g||_2  \right)  + |\eta(x) - \eta(z)|,
    \end{align*}
    so, by Lemma \ref{L2q Holder bounds for K} and the fact that $\eta$ is continuous, we see that $\Tilde{T}u$ is continuous.

    Now, given $u, u' \in \mathcal{C}_0(D)$, it is easy to see that
    \begin{equation*}
        ||\Tilde{T}u - \Tilde{T}u'|| \leq \Lambda L ||u - u'||_\infty
    \end{equation*}
    which, since $\Lambda L < 1$, tells us that $\Tilde{T}$ is a contraction for fixed $\eta \in \mathcal{C}_0(D)$. Given that $\mathcal{C}_0(D)$ endowed with the uniform norm is a Banach space, we obtain the existence of a unique fixed point in $\mathcal{C}_0(D)$ for the map $\Tilde{T}$, proving the existence and uniqueness of solutions of \eqref{equacio eta}.

    Lastly, consider $\eta, \eta' \in \mathcal{C}_0(D)$, then it is not so hard to see that
    \begin{equation*}
        ||\Psi(\eta) - \Psi(\eta') ||_\infty \leq \Lambda L ||\Psi(\eta) - \Psi(\eta') ||_2 + ||\eta - \eta' ||_\infty \leq \Lambda L ||\Psi(\eta) - \Psi(\eta') ||_\infty+ ||\eta - \eta' ||_\infty
    \end{equation*}
    which implies, using that $1 - \Lambda L > 0$,
    \begin{equation*}
        ||\Psi(\eta) - \Psi(\eta') ||_\infty \leq \frac{1}{1 - \Lambda L} ||\eta - \eta'||_\infty,
    \end{equation*}
    and thus, that $\Psi$ is a continuous map, finishing the proof.
\end{proof}

\end{document}